\theoremstyle{plain}
\newtheorem{theorem}{Theorem}
\newtheorem{proposition}{Proposition}
\newtheorem{corollary}{Corollary}
\newtheorem{lemma}{Lemma}
\theoremstyle{definition}
\newtheorem{definition}{Definition}
\theoremstyle{remark}
\newtheorem{remark}{Remark}
\newtheorem{example}{Example}
\newcommand*\derS{\mathrm{der}}
\begin{document}
\title{Universally uniformly continuous metric spaces}

\author{Katrina Gensterblum}
\address{Michigan State University, East Lansing, Michigan, USA}
\email{genster6@msu.edu}
\author{Peikai Qi}
\address{Nankai University, Tianjin, PRC}
\email{18096781997@189.cn}
\author{Willie Wong}
\address{Michigan State University, East Lansing, Michigan, USA}
\email{wongwwy@member.ams.org}

\subjclass[2010]{54E35, 54C99, 54F65}
\keywords{uniform continuity, metric spaces}

\begin{abstract}
	We answer the question: ``on which metric spaces $(M,d)$ are all continuous functions uniformly continuous?'' Our characterization theorem improves and generalizes a previous result due to Levine and Saunders, and in particular is applicable to metric spaces which are ``infinite dimensional.''
\end{abstract}

\maketitle

\section{Introduction}

It is well-known that if a set is compact then every continuous function on it is uniformly continuous. 
The converse however is not true. 
Previously, Levine \cite{Levine}, and later Levine and Saunders \cite{LS}, began to investigate sets on which every continuous function is uniformly continuous. 
In this paper we describe sufficient and necessary conditions for such sets (in the category of metric spaces), bringing their program to a conclusion. 
Before we give more technical descriptions, we'll introduce some important definitions.

\begin{definition}
	A metric space $(M,d)$ is said to be \emph{uniformly continuous} if every real-valued continuous function on $M$ is uniformly continuous. More generally, $(M,d)$ is said to be uniformly continuous with respect to a metric space $(M_1, d_1)$ if every continuous function from $M$ to $M_1$ is uniformly continuous.
\end{definition}

\begin{definition}
A metric space $(M,d)$ is said to be \emph{universally uniformly continuous} if it is uniformly continuous with respect to every metric space.
\end{definition}

\begin{definition}
A subset $E$ of a metric space $(M,d)$ is said to be \emph{uniformly isolated} if and only if there exists a real number $\eta > 0$ such that $d(p,q) \geq \eta$ for every $p \neq q$ in $E$.
\end{definition}

N. Levine in \cite{Levine} studied uniformly continuous subsets of the real line. 
He proved that the union of a compact set and an uniformly isolated set is uniformly continuous; and conversely, that every uniformly continuous subset of the real line is equal to the union of a compact set and an uniformly isolated set. 
Levine and Saunders \cite{LS} extended the first statement for general metric spaces; in particular they showed that if a metric space admits a decomposition as the union of a compact subset and a uniformly isolated subset, then it is uniformly continuous. 
However, they can only obtain a partial converse under the further assumption that the metric space has the so-called WB property. (A metric space has the WB property if the Heine-Borel theorem holds.) 

In this paper we give an unconditional characterization of uniformly continuous metric spaces. 
First we give a counterexample to the na\"ive conjecture that uniformly continuous spaces can always be decomposed into the union of a compact set and an uniformly isolated set.

\begin{example}
Take the set $M$ of infinite sequences, where a point $x$ in $M$ has the form $x=(x_1,x_2,x_3,\cdots ,x_n,\cdots)$. For $x,y \in M$, where $x=(x_1,x_2,x_3,\cdots ,x_n,\cdots)$ and $y =(y_1,y_2,y_3,\cdots,y_n,\cdots)$ we define the distance $d(x,y)$ as:
\[d(x,y):=\sqrt{(x_1-y_1)^2+(x_2-y_2)^2+\cdots +(x_n-y_n)^2+\cdots}.\]
Now let $E_0=\{p_{ij}\mid i,j \in \mathbb{Z}_{> 0}\} \subset M$ and $p_{ij}=(0,0,\cdots,\frac{1}{j},0,0,\cdots)$ where the $\frac{1}{j}$ is in the $i$th-slot. In other words,
\[ E_{0} = 
\begin{Bmatrix}
  (1,0,0,0,\cdots), & (0,1,0,0,\cdots), & (0,0,1,0,\cdots), & (0,0,0,1,\cdots), & \cdots \\
  (\frac{1}{2},0,0,0,\cdots), &(0,\frac{1}{2},0,0,\cdots), &(0,0,\frac{1}{2},0,\cdots),&(0,0,0,\frac{1}{2},\cdots), & \cdots \\
   (\frac{1}{3},0,0,0,\cdots), & (0,\frac{1}{3},0,0,\cdots), & (0,0,\frac{1}{3},0,\cdots), &(0,0,0,\frac{1}{3},\cdots),  & \cdots\\
    (\frac{1}{4},0,0,0,\cdots), & (0,\frac{1}{4},0,0,\cdots), & (0,0,\frac{1}{4},0,\cdots), &(0,0,0,\frac{1}{4},\cdots),  & \cdots\\
  \vdots& \vdots & \vdots & \vdots &\ddots
\end{Bmatrix}.
\]
Let $E=E_0\bigcup \{\vec{0}\}$. In this case $E$ fails the WB property, and $E$ does not equal the union of a compact set and an uniformly isolated set. However, $(E,d)$ is a uniformly continuous metric space.
\end{example}

\begin{proof}
We begin by observing that the subset $E\setminus B(\vec{0},\frac1m)$ is uniformly isolated: in fact, given any $x \in E\setminus B(\vec{0},\frac1m)$, and any $y \in E$, we can check that $d(x,y) \geq \frac{1}{m(m+1)}$. This allows us to prove by contradiction that $(E,d)$ is uniformly continuous. Suppose $f$ is a continuous, but not uniformly continuous function on $E$. Then there must exist $\varepsilon_0 > 0$ and two sequences $\{x_{(n)}\}$ and $\{y_{(n)}\}$ in $E$, satisfying $\lim\limits_{n\to\infty} d(x_{(n)}, y_{(n)}) = 0$, such that $|f(x_{(n)}) - f(y_{(n)})| > \varepsilon_0$ for every $n$. Using the uniform isolation of $E\setminus B(0,\frac1m)$ discussed above, we have that 
\[ d(x_{(n)}, y_{(n)}) < \frac{1}{m(m+1)} \implies x_{(n)}, y_{(n)} \in B(\vec{0}, \frac1m).\]
Hence $\lim\limits_{n\to \infty}x_{(n)}=\vec{0} = \lim\limits_{n\to \infty}y_{(n)}$. Then by the continuity of $f$ we must have $\lim\limits_{n\to\infty} f(x_{(n)}) = \lim\limits_{n\to\infty} f(y_{(n)})$, giving a contradiction.

Next we prove that $E$ is not the union of a uniformly isolated subset with a compact subset. Assume for contradiction that $E$ can be decomposed into $E_1$ and $E_2$, where $E_1$ is an uniformly isolated set and $E_2$ is a compact set.
(Note that as $E_2$ is compact, it is closed, so $E_1$ is open.)
First we can claim that, since $\vec{0}$ is a limit point of $E$, the set $E_1$ avoids an open ball centered at $\vec{0}$. For if not, there exists a sequence $\{x_{(n)}\}$ in $E_1$ satisfying $0 < d(\vec{0}, x_{(n)}) < \frac1n$, which implies $d(x_{(n)}, x_{(n+1)}) < \frac2n$, contradicting the uniformly isolated property.
This implies that for some positive integer $m$, the ball $B(\vec{0}, \frac{1}{m-1}) \subseteq E_2$. Consider the sequence $p_{1m}, p_{2m}, p_{3m}, \dots$ where $p_{ij}$ is as defined above. Each of the $p_{im} \in B(\vec{0}, \frac{1}{m-1}) \subseteq E_2$, but for $i\neq j$ we have $d(p_{im}, p_{jm}) = \sqrt{2}/m > 0$, hence we've found a bounded sequence in $E_2$ that does not admit a converging subsequence, contradicting the assumption that $E_2$ is compact. 
(This argument also shows that $E$ fails the WB property, as $B(\vec{0}, \frac{1}{m-1})$ is a closed and bounded subset of $E$ that is not compact.)
\end{proof}

\begin{remark}
$(E_0,d)$ is not an uniformly continuous metric space.  Take the bounded function
     \[f=\begin{cases}
                  1 & \text{if $i+j$ is even} \\
                  0 & \text{if $i+j$ is odd}
                \end{cases}
                \]
It is continuous as the topology induced on $(E_0,d)$ is discrete. 
It is not uniformly continuous as $d(p_{1j}, p_{1(j+1)}) = \frac{1}{j(j+1)}$ can be made arbitrarily small while $|f(p_{1j}- p_{1(j+1)}| = 1$.  
\end{remark}

It turns out that although an uniformly continuous metric space may not always admit a decomposition into a compact set and an uniformly isolated set, such a decomposition is ``almost'' available. 
This is the first of our main results. We can furthermore precisely identify the compact set in the decomposition.
Take our example above. $E_0$ is not uniformly continuous, but the set $E$ is. This is because $E$ contains $\{\vec{0}\}$, a compact subset which contains the unique limit point of the set. This motivates the following theorem:

\begin{theorem}\label{metricversion}
Let $(M,d)$ be a metric space. Then the following are equivalent:
\begin{itemize}
	\item $(M,d)$ is uniformly continuous.
    \item the derived set $\derS(M)$ is compact, and $\forall \varepsilon>0$, the set $M\setminus B(\derS(M),\varepsilon)$ is uniformly isolated.
\end{itemize}
\end{theorem}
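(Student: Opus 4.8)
The plan is to establish the two implications separately. The ``sufficiency'' direction (second bullet $\Rightarrow$ first) is essentially the argument already carried out for the Example, now powered by compactness of $\derS(M)$; the real work is the converse, which I will prove by contradiction through a single extraction-and-construction lemma. I would argue sufficiency as follows. Suppose $f\colon M\to\mathbb{R}$ is continuous but not uniformly continuous, producing $\varepsilon_0>0$ and sequences $x_n,y_n$ with $d(x_n,y_n)\to 0$ but $|f(x_n)-f(y_n)|>\varepsilon_0$, so in particular $x_n\neq y_n$. Fixing $\varepsilon>0$ and letting $\eta_\varepsilon$ be the isolation constant of $M\setminus B(\derS(M),\varepsilon)$, for all large $n$ we have $d(x_n,y_n)<\min(\eta_\varepsilon,\varepsilon)$; since the points are distinct they cannot both lie in the uniformly isolated set, so at least one lies in $B(\derS(M),\varepsilon)$ and hence both lie in $B(\derS(M),2\varepsilon)$. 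Letting $\varepsilon\to 0$ shows $d(x_n,\derS(M))\to 0$ and $d(y_n,\derS(M))\to 0$. Compactness now enters: choosing $c_n\in\derS(M)$ realizing the attained distance to $x_n$, a subsequence $c_{n_k}\to c\in\derS(M)\subseteq M$, whence $x_{n_k}\to c$ and $y_{n_k}\to c$, and continuity of $f$ at $c$ forces $|f(x_{n_k})-f(y_{n_k})|\to 0$, the desired contradiction.

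For necessity I would isolate the following lemma: if $M$ is uniformly continuous, then there exist no sequences $a_n\neq b_n$ with $d(a_n,b_n)\to 0$ for which $Z=\{a_n\}\cup\{b_n\}$ has no accumulation point in $M$. Both required properties then follow. If some $M\setminus B(\derS(M),\varepsilon)$ were not uniformly isolated, I would extract distinct $a_n,b_n$ in it with $d(a_n,b_n)\to 0$; any accumulation point of $Z$ would be a limit point of $M$, hence a point of $\derS(M)$, yet every point of $Z$ is at distance $\geq\varepsilon$ from $\derS(M)$, so $Z$ has no accumulation point and the lemma applies. If $\derS(M)$ were not compact, then, because $\derS(M)$ is closed in $M$ (so failure of sequential compactness is failure of compactness), there is a sequence $z_n\in\derS(M)$ with no convergent subsequence; using that each $z_n$ is a limit point I would choose distinct $a_n,b_n\in B(z_n,\frac{1}{2n})$, giving $d(a_n,b_n)\to 0$, while any accumulation point of $\{a_n\}\cup\{b_n\}$ would produce a convergent subsequence of $\{z_n\}$ — impossible — so the lemma again yields a contradiction.

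To prove the lemma I would first note that each point of $Z$ is used only finitely often (if $a^\ast=a_n$ for infinitely many $n$, the corresponding $b_n$ would converge to $a^\ast$, an accumulation point), so after passing to a subsequence the pairs may be taken pairwise disjoint. Then $P=\{a_n\}$ and $Q=\{b_n\}$ are disjoint and each closed, since $Z$ has no accumulation point, and the Urysohn-type function
\[ f(x)=\frac{d(x,Q)}{d(x,P)+d(x,Q)} \]
is well defined and continuous, its denominator never vanishing because $\overline{P}\cap\overline{Q}=P\cap Q=\emptyset$. Since $f(a_n)=1$, $f(b_n)=0$, and $d(a_n,b_n)\to 0$, it is continuous but not uniformly continuous, contradicting the hypothesis.

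I expect the necessity direction to be the main obstacle, and in particular the conceptual point of why a naive oscillation argument fails: one cannot simply force $f$ to disagree on $x_n$ and a nearby point converging to the \emph{same} limit, since continuity at that limit would destroy the oscillation. The resolution is to work instead with \emph{pairs} that collectively admit no accumulation point and to build the separating function on the two disjoint closed sets they form; consequently the delicate step is verifying, in each application, that the relevant point set truly has no accumulation point, which for compactness reduces precisely to the failure of sequential compactness of the closed set $\derS(M)$.
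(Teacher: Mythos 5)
Your proof is correct, and it takes a genuinely different route from the paper's in both directions. For sufficiency, the paper argues directly rather than by contradiction: it covers the compact set $\derS(M)$ by finitely many one-third-radius continuity balls, invokes a lemma (Lemma \ref{lem:cpctnbhd}) producing a uniform radius $r$ with $B(z,r)\subset U$ for every $z$ in a compact $K\subset U$, and combines these with the isolation constant to exhibit an explicit $\delta$ for each $\eta$; this covering argument proves uniform continuity into an \emph{arbitrary} target metric space, which is exactly what feeds Corollary \ref{basicCor}. Your sequential argument (pick $c_n\in\derS(M)$ attaining the distance, extract $c_{n_k}\to c$, contradict continuity at $c$) is shorter and in fact also works for any target, since you use continuity only at the single point $c$ --- worth saying explicitly if one wants the corollary --- and you should dispose of the trivial case $\derS(M)=\emptyset$, where the contradiction already occurs at the step ``at least one point lies in $B(\derS(M),\varepsilon)$''. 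For necessity, both proofs pivot on the same separating function $x\mapsto d(x,Q)/[d(x,P)+d(x,Q)]$, but the architecture differs: the paper reaches compactness of $\derS(M)$ via completeness (Proposition \ref{prop:LS1}, the Levine--Saunders theorem, proved by splitting a Cauchy sequence into even- and odd-indexed halves) plus total boundedness, whereas you bypass completeness entirely, contradicting sequential compactness directly by planting a distinct pair $a_n\neq b_n$ in $B(z_n,\frac{1}{2n})$ around each term of a subsequence-free sequence in $\derS(M)$. A point in your favor: your finite-multiplicity-plus-extraction step rigorously produces disjoint closed ranges before the separating function is applied, a detail the paper's Proposition \ref{prop:UCUI} glosses over with the phrase ``two disjoint minimizing sequences''. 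In exchange, the paper's route yields universality and an explicit modulus of continuity; yours is more elementary and self-contained, never needing completeness or total boundedness.
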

\begin{remark}
	The set $\derS(M)$ is defined to be the set of all limit points of $(M,d)$. The set $B(\derS(M),\varepsilon)$ is the set of all points of distance at most $\varepsilon$ from $\derS(M)$; equivalently we take 
    \[ B(\derS(M), \varepsilon) = \bigcup_{x\in \derS(M)} B(x,\varepsilon).\]
    Note that when $\derS(M) = \emptyset$ this means that for every $\varepsilon > 0$, the set $M \setminus B(\derS(M),\varepsilon) = M$. 
\end{remark}

Our theorem does not refer to the WB condition of Levine and Saunders. 
This allows it to be applied to more general situations. 
A particular consequence of this is that our theorem applies also to \emph{universally} uniformly continuous metric spaces.

\begin{corollary}\label{basicCor} 
Let $(M,d)$ be a metric space. Then the following are equivalent: 
\begin{itemize}
	\item $(M,d)$ is universally uniformly continuous. 
    \item the derived set $\derS(M)$ is compact, and $\forall \varepsilon>0$, the set $M\setminus B(\derS(M),\varepsilon)$ is uniformly isolated.
\end{itemize}
\end{corollary}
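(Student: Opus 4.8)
The plan is to derive Corollary~\ref{basicCor} from Theorem~\ref{metricversion} by establishing that, for metric spaces, universal uniform continuity coincides with ordinary (real-valued) uniform continuity. One implication is essentially free: since $\mathbb{R}$ with its usual metric is a metric space, a universally uniformly continuous $(M,d)$ is in particular uniformly continuous with respect to $\mathbb{R}$, i.e.\ uniformly continuous in the original (real-valued) sense, and Theorem~\ref{metricversion} then delivers the geometric conclusion that $\derS(M)$ is compact and each $M\setminus B(\derS(M),\varepsilon)$ is uniformly isolated. So the forward direction of the corollary needs no work beyond quoting the theorem.

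The content is in the converse. Assuming the geometric condition, I want to show that an arbitrary continuous map $f\colon M\to M_1$ into any metric space $(M_1,d_1)$ is uniformly continuous. The guiding observation is that the contradiction argument already used for the Example---and, I expect, for the hard direction of Theorem~\ref{metricversion}---never exploits anything about the codomain beyond its triangle inequality and the sequential characterization of continuity; it therefore transcribes verbatim once the scalar gap $|f(x)-f(y)|$ is replaced by the distance $d_1(f(x),f(y))$. Thus I would suppose, for contradiction, that $f$ is not uniformly continuous, producing $\varepsilon_0>0$ and sequences $\{x_{(n)}\},\{y_{(n)}\}$ in $M$ with $d(x_{(n)},y_{(n)})\to 0$ but $d_1(f(x_{(n)}),f(y_{(n)}))>\varepsilon_0$ for every $n$; note the lower bound forces $x_{(n)}\neq y_{(n)}$.

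The crux is to show these sequences accumulate on the compact set $\derS(M)$. Fix $\varepsilon>0$ and let $\eta$ be the isolation constant of $M\setminus B(\derS(M),\varepsilon/2)$. Once $n$ is large enough that $d(x_{(n)},y_{(n)})<\min(\eta,\varepsilon/2)$, the distinct points $x_{(n)},y_{(n)}$ cannot both lie outside $B(\derS(M),\varepsilon/2)$, since two distinct points of that uniformly isolated set must be at distance at least $\eta$; hence at least one of them lies within $\varepsilon/2$ of $\derS(M)$, and the triangle inequality then gives $d(x_{(n)},\derS(M))\le\varepsilon$. As $\varepsilon$ was arbitrary, $d(x_{(n)},\derS(M))\to 0$. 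Choosing near-closest points $z_{(n)}\in\derS(M)$ and using compactness of $\derS(M)$, I extract a subsequence $z_{(n_k)}\to z^{*}\in\derS(M)\subseteq M$; then $x_{(n_k)}\to z^{*}$, and since $d(x_{(n)},y_{(n)})\to 0$ also $y_{(n_k)}\to z^{*}$. Continuity of $f$ at $z^{*}$ forces $f(x_{(n_k)})\to f(z^{*})$ and $f(y_{(n_k)})\to f(z^{*})$ in $M_1$, so $d_1(f(x_{(n_k)}),f(y_{(n_k)}))\to 0$, contradicting $d_1(f(x_{(n)}),f(y_{(n)}))>\varepsilon_0$. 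This proves universal uniform continuity and closes the loop.

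I expect the only delicate point to be the bookkeeping in the accumulation step: the isolation hypothesis is ``two-sided'' in that it controls pairs of points both lying outside the neighborhood, so some care is needed to argue that a small gap $d(x_{(n)},y_{(n)})$ nevertheless pulls the endpoint $x_{(n)}$ into the $\varepsilon$-neighborhood of $\derS(M)$, with the degenerate case $x_{(n)}=y_{(n)}$ ruled out at the outset by positivity of the target distance. Beyond this, the proof is a routine transcription of the real-valued case, which is precisely why the corollary follows from Theorem~\ref{metricversion} with no essentially new ideas.
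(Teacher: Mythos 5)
Your proposal is correct, but the key step is proved by a genuinely different argument than the paper's. The forward direction is identical: universal uniform continuity specializes (taking $\mathbb{R}$ as codomain) to real-valued uniform continuity, and Theorem~\ref{metricversion} then yields the geometric condition; the paper organizes exactly this as the ``by definition'' step in its closing cycle of implications. For the hard direction --- geometric condition implies universal uniform continuity, which is the paper's Proposition~\ref{prop:derCUIUC} and cannot be deduced formally from the real-valued theorem, as you rightly recognize --- the paper gives a direct, quantitative $\varepsilon$--$\delta$ proof: it covers the compact set $\derS(M)$ by finitely many balls $B(x_i, \delta_{x_i}/3)$ supplied by continuity, uses Lemma~\ref{lem:cpctnbhd} to fit a uniform collar $B(\derS(M), 2\delta_2)$ inside that cover, and intersects with the isolation constant to produce an explicit $\delta_3$ such that any pair at distance less than $\delta_3$ lies in a common ball $B(x_j, \delta_{x_j})$, hence has images within $\eta$. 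You instead argue by contradiction via sequential compactness: bad pairs $x_{(n)}, y_{(n)}$ (distinct, since their images are $\varepsilon_0$-separated) are forced by uniform isolation into shrinking neighborhoods of $\derS(M)$, compactness extracts a common limit $z^* \in \derS(M)$, and continuity at $z^*$ gives the contradiction --- this is essentially the paper's argument for its introductory Example, transplanted to an arbitrary metric codomain. Your route is shorter and dispenses with Lemma~\ref{lem:cpctnbhd}, at the cost of being non-constructive (no modulus of continuity is exhibited), whereas the paper's proof produces $\delta_3$ explicitly as a function of $\eta$. One point you should make explicit: when $\derS(M) = \emptyset$ your accumulation step degenerates (there is nothing to accumulate on), but the proof closes even faster there, since then all of $M$ is uniformly isolated and distinct points $x_{(n)} \neq y_{(n)}$ cannot satisfy $d(x_{(n)}, y_{(n)}) \to 0$ at all.
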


It is natural to ask whether a similar characterization statement can be made using purely topological properties. The obvious answer is no, noticing that
uniform continuity is not a topological property. 
In particular, we can have two metrics $d_1, d_2$ on the same set $M$ generating the same topology where one is uniformly continuous, but the other is not. 
\begin{example}
Take $M$ to be the set of all positive integers, and let $d_1(a,b) = |a-b|$ be the distance function defined as normal, and set $d_2(a,b) = |\frac1a - \frac1b|$. 
By Theorem \ref{metricversion} $(M,d_1)$  is uniformly continuous, but not $(M,d_2)$. 
\end{example}
Our second result instead answers the refined question ``when does a metric space $(M,\delta)$ admit a (possibly different) metric $d$ generating the same topology, with $(M,d)$ being (universally) uniformly continuous?''

Notice that two metric spaces $(M, \delta)$ and $(M,d)$ with the same topology admit the same set of continuous functions $\mathcal{C}$. 
But with respect to the two distance functions the corresponding subsets $\mathcal{U}_\delta, \mathcal{U}_d \subset \mathcal{C}$ of uniformly continuous functions can be different. 
Our goal can be interpreted as asking when can one choose $d$ to maximize $\mathcal{U}_d$ so that it is in fact equal to $\mathcal{C}$. 

\begin{theorem}\label{topologyversion}
Let $(M,\delta)$ be metric space. If $\derS(M)$ is a compact set, then $\exists$ a new distance function $d$ on $M$ such that 
\begin{itemize}
	\item $(M,d)$ is uniformly continuous,
    \item $(M,\delta)$ and $(M,d)$ have the same open sets.
\end{itemize}
\end{theorem}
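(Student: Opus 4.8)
The plan is to exploit the fact that both the underlying set $\derS(M)$ and its compactness are topological invariants: any metric $d$ generating the same topology as $\delta$ has the identical derived set, and that set is again compact. Consequently, by Theorem~\ref{metricversion}, to prove the theorem it suffices to manufacture a topologically equivalent $d$ for which the second bullet's \emph{uniform isolation} condition holds, namely that $M\setminus B(\derS(M),\varepsilon)$ is uniformly isolated for every $\varepsilon>0$. All of the work is therefore in reshaping $\delta$ away from $D:=\derS(M)$ while leaving the topology untouched.

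The construction I propose is an additive inflation of the metric at the isolated points. Assuming $D\neq\emptyset$ (otherwise $M$ is discrete and the uniformly discrete metric $d(x,y)=1$ for $x\neq y$ already works), I would set
\[ c(x) = \min\bigl(\delta(x,D),\,1\bigr), \qquad \delta(x,D) = \inf_{z\in D}\delta(x,z), \]
so that $c\equiv 0$ on $D$ while $c(p)>0$ for every isolated point $p$ (an isolated point has a $\delta$-ball around it meeting $M$ only in itself, so $\delta(p,D)>0$). Then define
\[ d(x,y) = \begin{cases} \delta(x,y)+c(x)+c(y), & x\neq y,\\ 0, & x=y.\end{cases} \]
A short check shows $d$ is a metric: symmetry and positivity are immediate, and the triangle inequality for $d$ reduces to that for $\delta$ together with the nonnegativity of $c$.

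It then remains to verify the two bullets. For the topology I would compare convergent sequences: since $\delta\le d$, every $d$-convergent sequence is $\delta$-convergent to the same limit; conversely a $\delta$-convergent sequence with limit $z$ is eventually constant if $z$ is isolated, while if $z\in D$ then the $c$-values of its terms tend to $0$ because their distance to $D$ is controlled by their distance to $z$, so the sequence is $d$-convergent to $z$ as well. Hence $\delta$ and $d$ share the same open sets, the same derived set $D$, and the same (compact) subspace topology on $D$. For the uniform isolation, I would compute $d(x,D)=\delta(x,D)+c(x)$ at an isolated $x$; if this is at least $\varepsilon$, then $\delta(x,D)\ge\varepsilon/2$, forcing $c(x)\ge\min(\varepsilon/2,1)$. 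Thus any two distinct points of $M\setminus B(D,\varepsilon)$ lie at $d$-distance at least $c(x)+c(y)\ge\min(\varepsilon,2)>0$, which is exactly uniform isolation. Theorem~\ref{metricversion} then yields that $(M,d)$ is uniformly continuous.

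The main obstacle is the design of the inflation weight $c$, which must simultaneously satisfy two competing demands: it has to vanish as one approaches $D$ (so that isolated points clustering onto a limit point keep converging to it, preserving the topology) yet stay bounded below once one is a fixed distance from $D$ (so that points well separated from $D$ become uniformly separated). The truncated distance-to-$D$ function threads exactly this needle, and the chief subtlety to confirm is that the additive construction genuinely defines a metric and that inflating the isolated points neither destroys limit points of $D$ nor creates new ones.
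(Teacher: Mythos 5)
Your proposal is correct, and it takes a genuinely different route from the paper. Both arguments reduce, as yours does, to Theorem~\ref{metricversion} (the derived set and its compactness are topological invariants, so only uniform isolation needs to be arranged), but the constructions differ. The paper modifies $\delta$ by a maximum on dyadic scales: for $x,y$ outside $\derS(M)$ it sets $d(x,y)=\max\bigl(\delta(x,y),2^m\bigr)$, where $m$ is the integer with $2^m<\max\bigl(\delta(x,\derS(M)),\delta(y,\derS(M))\bigr)\le 2^{m+1}$, and leaves $d=\delta$ whenever one of the two points lies in $\derS(M)$. The price is that the triangle inequality is no longer obvious: the paper needs a separate technical lemma (coordinatewise maxima of triples satisfying the triangle inequality again satisfy it) together with a three-case analysis according to how many of the three points lie in $\derS(M)$; it then compares the topologies ball-by-ball and extracts uniform isolation from the dyadic floor. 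Your additive inflation $d(x,y)=\delta(x,y)+c(x)+c(y)$ with $c=\min\bigl(\delta(\cdot,\derS(M)),1\bigr)$ trades all of that for essentially trivial verifications: the triangle inequality is immediate from $c\ge 0$, topological equivalence follows from a short sequential argument (legitimate, since metric topologies are determined by convergent sequences), and uniform isolation comes from the two-line bound $d(x,y)\ge c(x)+c(y)\ge\min(\varepsilon,2)$ once $d(x,\derS(M))\ge\varepsilon$ forces $\delta(x,\derS(M))\ge\varepsilon/2$. You also handle the degenerate case $\derS(M)=\emptyset$ explicitly, which the paper's construction passes over (its ``unique integer $m$'' does not exist when $\delta(x,\derS(M))=+\infty$). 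What the paper's construction buys in exchange is that $d$ coincides with $\delta$ on every pair meeting $\derS(M)$, whereas yours inflates distances from isolated points to the derived set, though only by a factor of at most $2$; this feature plays no role in the theorem, so nothing is lost by your simpler route.
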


\begin{remark}
Equivalently, Theorem \ref{topologyversion} states that: if $M$ is a metrizable topological space whose derived set is compact, then there is a choice of metric making $M$ uniformly continuous.  
\end{remark}

\subsection*{Acknowledgements}
This paper is the result of an undergraduate research project conducted by the two first named authors at Michigan State University.
KG's participation is supported through the Professorial Assistantship program by the Michigan State University Honors College.
PQ's participation is supported by the exchange program organized by the MSU Math department.

After the initial arXiv posting of this manuscript it was pointed out to the authors that the results proven in the manuscript are not new; in fact, what we defined as ``uniformly continuous spaces'' now are properly known under the name ``Atsuji spaces'', and there exists an entire literature on their properties. 
The authors would like to thank Subiman Kundu for bringing this to their attention, and refer the readers to the exhaustive review article \cite{Kundu} for more information on these spaces. 

\section{Proof of Theorem \ref{metricversion} and its Corollary}
Throughout this section we fix $(M,d)$ to be a metric space. 
\begin{lemma}\label{lem:cpctnbhd}
Let $K\subset U \subset M$, where $K$ is nonempty compact and $U$ open. 
Then there is $r>0$ such that $B(z,r) \subset U $ for any $z \in K$. Note that $r$ doesn't depend on $z \in K$.
\end{lemma}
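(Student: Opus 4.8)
The plan is to use compactness of $K$ to upgrade the pointwise fact ``each $z\in K$ has some ball around it contained in $U$'' into a uniform statement with a single radius working for all $z$ simultaneously. This is essentially a Lebesgue-number argument.

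First I would exploit the openness of $U$: for each $z\in K$, since $z\in U$ and $U$ is open, there is a radius $\rho_z>0$ with $B(z,2\rho_z)\subset U$. I take twice the radius here purely for convenience in the triangle-inequality step below. The collection $\{B(z,\rho_z)\}_{z\in K}$ is then an open cover of $K$. Next I would invoke compactness of $K$ to extract a finite subcover $B(z_1,\rho_{z_1}),\dots,B(z_n,\rho_{z_n})$, and set $r=\min\{\rho_{z_1},\dots,\rho_{z_n}\}$, which is positive because it is a minimum of finitely many positive numbers (and the subcover is nonempty, as $K$ is nonempty).

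The final step is to verify this $r$ works. Given any $z\in K$, pick an index $i$ with $z\in B(z_i,\rho_{z_i})$; then for any $w\in B(z,r)$ the triangle inequality gives $d(w,z_i)\le d(w,z)+d(z,z_i)< r+\rho_{z_i}\le 2\rho_{z_i}$, using $r\le\rho_{z_i}$. Hence $w\in B(z_i,2\rho_{z_i})\subset U$, and therefore $B(z,r)\subset U$. Since $z\in K$ was arbitrary and $r$ was chosen before $z$, this is exactly the uniform conclusion sought.

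I do not expect a genuine obstacle here; the only point to watch is the degenerate case $U=M$, where the claim is trivial since every ball lies in $U$, and the argument above handles it uniformly. As an alternative route one could consider the $1$-Lipschitz function $z\mapsto d(z,M\setminus U)=\inf_{y\notin U}d(z,y)$, which is continuous and strictly positive on $K$ (as $U$ is open and $K\subset U$); the extreme value theorem then yields a positive minimum $r$ over the compact set $K$, and $r\le d(z,M\setminus U)$ for every $z\in K$ forces $B(z,r)\subset U$. The crucial role of compactness in both routes is identical: it converts a pointwise positive lower bound into a single uniform one.
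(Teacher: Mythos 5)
Your proof is correct and is essentially identical to the paper's argument: cover $K$ by half-radius balls $B(z,\rho_z)$ where $B(z,2\rho_z)\subset U$, extract a finite subcover by compactness, take $r$ to be the minimum of the finitely many radii, and conclude with the triangle inequality. (Your alternative route via the continuous function $z\mapsto d(z,M\setminus U)$ and the extreme value theorem is also sound, given your handling of the degenerate case $U=M$.)
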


\begin{proof}
For each $z \in K \subset U$, since $U$ is open there exists $\delta_{z} > 0$ such that $B(z,\delta_z)\subset U$. Then $\{B(z,\delta_z/2)\mid z\in K\}$ is an open cover of $K$. Since $K$ is compact, there is a finite sub-cover associated to $\{z_1, \ldots, z_n\} \subset K$. 
Let $r=\min\{\delta_{z_1}/2,\delta_{z_2}/2,\dots,\delta_{z_n}/2\}$. 

Now let $z\in K$ be arbitrary. Our construction above implies that there is some $i\in \{1, \dots, n\}$ such that $d(z, z_i) < \delta_{z_i} / 2$. 
By the triangle inequality we see then any point $y\in B(z,r)$ is also in $B(z_i, \delta_{z_i}/2 + r) \subset B(z_i, \delta_{z_i}) \subset U$ as claimed. 
\end{proof}

\begin{proposition}\label{prop:derCUIUC}
If $\derS(M)$ is compact and $\forall \varepsilon > 0$ the set $M \setminus B(\derS(M),\varepsilon)$ is uniformly isolated, then $(M,d)$ is universally uniformly continuous. 
\end{proposition}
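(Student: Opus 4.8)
The plan is to prove the contrapositive-style characterization by a direct sequential argument, mirroring the structure of the worked Example above. Suppose $f\colon M \to M_1$ is continuous but \emph{not} uniformly continuous into some metric space $(M_1,d_1)$. Then there exist $\varepsilon_0 > 0$ and sequences $\{x_{(n)}\}$, $\{y_{(n)}\}$ in $M$ with $d(x_{(n)}, y_{(n)}) \to 0$ but $d_1(f(x_{(n)}), f(y_{(n)})) > \varepsilon_0$ for all $n$. The goal is to derive a contradiction from the two hypotheses, so that no such $f$ can exist and $(M,d)$ is universally uniformly continuous.

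The key step is to show that the sequences $\{x_{(n)}\}$, $\{y_{(n)}\}$ must accumulate on $\derS(M)$. First I would observe that for each fixed $\varepsilon > 0$, the uniform isolation of $M \setminus B(\derS(M), \varepsilon)$ with isolation constant $\eta(\varepsilon)$ forces the following: once $d(x_{(n)}, y_{(n)}) < \eta(\varepsilon)$, at least one of $x_{(n)}, y_{(n)}$ lies in $B(\derS(M), \varepsilon)$ — otherwise both points sit in the uniformly isolated set and must be at distance $\geq \eta(\varepsilon)$ apart (unless they coincide, in which case $d_1(f(x_{(n)}),f(y_{(n)})) = 0 < \varepsilon_0$, also a contradiction). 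By the triangle inequality, if $x_{(n)} \in B(\derS(M),\varepsilon)$ and $d(x_{(n)},y_{(n)})$ is small, then $y_{(n)} \in B(\derS(M), 2\varepsilon)$ as well, so \emph{both} points lie close to $\derS(M)$. Since $d(x_{(n)},y_{(n)}) \to 0$, a diagonal argument over $\varepsilon = 1/k$ shows that the distance from $x_{(n)}$ (and $y_{(n)}$) to $\derS(M)$ tends to $0$.

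Now I would invoke compactness of $\derS(M)$. For each $n$ choose $z_{(n)} \in \derS(M)$ with $d(x_{(n)}, z_{(n)})$ within twice the distance to the set; since $\derS(M)$ is compact (hence sequentially compact as a metric space), pass to a subsequence along which $z_{(n)} \to z^* \in \derS(M)$. Then $d(x_{(n)}, z^*) \leq d(x_{(n)}, z_{(n)}) + d(z_{(n)}, z^*) \to 0$, and likewise $d(y_{(n)}, z^*) \to 0$ using $d(x_{(n)},y_{(n)}) \to 0$. Thus along this subsequence both $x_{(n)} \to z^*$ and $y_{(n)} \to z^*$. Continuity of $f$ at $z^*$ then gives $f(x_{(n)}) \to f(z^*)$ and $f(y_{(n)}) \to f(z^*)$, so $d_1(f(x_{(n)}), f(y_{(n)})) \to 0$, contradicting $d_1(f(x_{(n)}), f(y_{(n)})) > \varepsilon_0$.

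The main obstacle I anticipate is the bookkeeping around the case $\derS(M) = \emptyset$, where $B(\derS(M),\varepsilon) = \emptyset$ and the whole space $M = M \setminus B(\derS(M),\varepsilon)$ is uniformly isolated; here the argument short-circuits because no sequence can satisfy $d(x_{(n)},y_{(n)}) \to 0$ with the points distinct, so $f$ is automatically uniformly continuous. The other delicate point is making the ``both points are near $\derS(M)$'' step uniform enough that the chosen $z_{(n)}$ genuinely approach a common limit for both sequences; handling the degenerate coincidence $x_{(n)} = y_{(n)}$ separately, as noted, keeps the isolation argument clean.
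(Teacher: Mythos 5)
Your proof is correct, but it takes a genuinely different route from the paper's. The paper argues \emph{directly}: given $\eta > 0$, it covers the compact set $\derS(M)$ by finitely many balls $B(x_i, \delta_{x_i}/3)$ coming from pointwise continuity, invokes a Lebesgue-number-type lemma to find $\delta_2$ with $B(\derS(M), 2\delta_2)$ inside that cover, and then combines $\delta_2$ with the isolation constant of $M \setminus B(\derS(M),\delta_2)$ to produce an explicit $\delta_3$ witnessing uniform continuity: any two points at distance less than $\delta_3$ either coincide up to isolation or both fall inside a single ball $B(x_j, \delta_{x_j})$, forcing their images within $\eta$ of each other. Your argument instead proceeds by contradiction through sequences: you extract sequences witnessing the failure of uniform continuity, use the isolation hypothesis (for every $\varepsilon = 1/k$) to force both sequences toward $\derS(M)$, then use sequential compactness of $\derS(M)$ to find a common limit point $z^*$ along a subsequence, and contradict continuity of $f$ at $z^*$. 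Both uses of compactness are essential but different in flavor: finite subcover versus sequential compactness (equivalent for metric spaces). Your approach is shorter, mirrors the paper's introductory Example, and avoids the $\delta_1, \delta_2, \delta_3$ bookkeeping; the paper's approach is more quantitative, as it exhibits an explicit modulus of uniform continuity built from the pointwise continuity data rather than deducing existence by contradiction. Your handling of the two delicate points — the degenerate case $x_{(n)} = y_{(n)}$ (excluded since $d_1(f(x_{(n)}), f(y_{(n)})) > \varepsilon_0 > 0$) and the case $\derS(M) = \emptyset$ (where the whole space is uniformly isolated and uniform continuity is immediate) — is sound, so there is no gap to repair.
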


\begin{proof}
Let $(M', d')$ be an arbitrary metric space, and suppose $f: M \to M'$ is a continuous function. 
Fix $\eta > 0$, continuity implies that for every $x\in M$ there exists $\delta_x > 0$ such that for every $y\in B(x,\delta_x)$ we have $f(y) \in B(f(x), \eta / 2)$. 
By compactness of $\derS(M)$, we can find a finite subset $\{x_1, \dots, x_n\} \subset \derS(M)$ such that 
\[ \derS(M) \subset \bigcup_{i = 1}^n B(x_i, \frac{\delta_{x_i}}{3}) =: U_1. \]
Let us set $\delta_1 = \min_{i\in \{1, \ldots, n\}} \delta_{x_i}/3$. 
Now $U_1 \supset \derS(M)$, and by Lemma \ref{lem:cpctnbhd} there exists some $\delta_2 > 0$ such that $B(\derS(M),2\delta_2) \subset U_1$.  We can assume $2\delta_2 \leq \delta_1$, otherwise we can replace $\delta_2$ by $\delta_1 /2$  and the conclusion of the Lemma still holds.  
By hypothesis $M \setminus B(\derS(M), \delta_2)$ is uniformly isolated, so there exists $\delta_3$ such that if $z_1, z_2\in M\setminus B(\derS(M), \delta_2)$ then $d(z_1, z_2) > \delta_3$. We can assume $\delta_3 < \delta_2$; otherwise we can replace $\delta_3$ by $\delta_2$ and the conclusion still holds. 

Now suppose $z_1, z_2\in M$ are such that $d(z_1, z_2) < \delta_3$. 
Then necessarily at least one of the two points is in $B(\derS(M), \delta_2)$ and thus both are in $B(\derS(M), 2\delta_2) \subset U_1$. 
By construction then there exists some $j\in \{1, \dots, n\}$ such that $z_1 \in B(x_{j}, \frac{\delta_{x_j}}{3})$. And thus also $z_2 \in B(x_{j}, \frac{\delta_{x_j}}{3} + \delta_3)$. This implies that both $z_1, z_2 \in B(x_{j}, \delta_{x_j})$, by our construction of $\delta_1, \delta_2, \delta_3$. Therefore both $f(z_1), f(z_2) \in B(f(x_j), \eta / 2)$, which implies $d'(f(z_1), f(z_2)) < \eta$. And we have shown that $f$ is uniformly continuous. 
\end{proof}

This proves one direction of our desired result. 
For the reverse direction, first we recall Theorem 1 from \cite{LS}, which states, in our terminology, 
\begin{proposition}[Levine-Saunders] \label{prop:LS1}
	If $(M,d)$ is an uniformly continuous metric space, then $(M,d)$ is complete. 
\end{proposition}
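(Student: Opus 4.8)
The plan is to prove the contrapositive: I will assume $(M,d)$ is \emph{not} complete and then exhibit a real-valued continuous function on $M$ that fails to be uniformly continuous, contradicting the hypothesis. Non-completeness furnishes a Cauchy sequence $\{y_n\}$ in $M$ with no limit in $M$. The cleanest way to exploit this is to pass to the completion $\hat{M}$ of $(M,d)$, whose metric I continue to denote $d$; the sequence $\{y_n\}$ then converges in $\hat{M}$ to some point $p \in \hat{M} \setminus M$.

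The key construction is the function $f \colon M \to \mathbb{R}$ defined by $f(x) = 1 / d(x,p)$, the reciprocal of the distance to the missing limit point $p$. Since the map $x \mapsto d(x,p)$ is $1$-Lipschitz on $\hat{M}$ and, crucially, strictly positive on $M$ (because $p \notin M$), the function $f$ is well-defined and continuous on $M$. This is the step where non-completeness enters in an essential way: it is precisely the presence of a ``hole'' $p$ that allows a continuous function on $M$ to blow up near a point which, from the perspective of $M$ alone, is inaccessible.

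To show $f$ is not uniformly continuous, I would thin out the Cauchy sequence. Since $d(y_n, p) \to 0$ with every term positive, I can pass to a subsequence $\{x_k\}$ satisfying $d(x_{k+1}, p) < \tfrac12 d(x_k, p)$. Then on one hand $d(x_k, x_{k+1}) \le d(x_k,p) + d(x_{k+1},p) \to 0$, while on the other hand $|f(x_k) - f(x_{k+1})| = \frac{1}{d(x_{k+1},p)} - \frac{1}{d(x_k,p)} > \frac{1}{d(x_k,p)} \to \infty$. Thus the two sequences $\{x_k\}$ and $\{x_{k+1}\}$ witness the failure of uniform continuity (with $\varepsilon_0 = 1$, say), contradicting the assumption that every continuous function on $M$ is uniformly continuous.

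I do not expect a serious obstacle; the argument is elementary once one adopts the completion viewpoint. The only points requiring care are verifying that $f$ is continuous, which reduces to the positivity of $d(\cdot, p)$ on $M$, and arranging the geometric decay $d(x_{k+1},p) < \tfrac12 d(x_k,p)$ so that consecutive images separate by a fixed amount. Both are routine. An alternative that avoids explicitly invoking the completion would define $f$ directly in terms of the non-convergent Cauchy sequence, but tracking the reciprocal distance to $p$ is both shorter and more transparent.
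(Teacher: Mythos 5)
Your proof is correct, but it takes a genuinely different route from the paper. The paper never leaves $M$: it first proves a separation lemma (Lemma \ref{lem:const}) stating that in a uniformly continuous space any two disjoint closed sets $A, B$ are at positive distance, witnessed by the bounded continuous function $x \mapsto d(x,A)/[d(x,A)+d(x,B)]$; it then applies this to the even- and odd-indexed subsequences of a Cauchy sequence with no convergent subsequence, which form disjoint closed sets at distance zero. You instead pass to the completion $\hat{M}$, locate the missing limit $p$, and blow up the reciprocal distance $1/d(\cdot,p)$ along the sequence. Each approach has its advantages: the paper's lemma is reused twice more (in Propositions \ref{prop:UCderC} and \ref{prop:UCUI}), so its cost is amortized over the whole argument, and it shows the failure of uniform continuity can be witnessed by a \emph{bounded} function; your argument is more self-contained for this single proposition and sidesteps a subtlety the paper glosses over, namely that the even- and odd-indexed term sets need not be disjoint if the sequence repeats values (repairing this requires first thinning to distinct terms). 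Your reliance on the completion is the one heavier ingredient, but as you note it can be eliminated by defining $g(x) = \lim_{n} d(x,y_n)$ directly; this limit exists since $\{d(x,y_n)\}$ is Cauchy in $\mathbb{R}$, is $1$-Lipschitz in $x$, and is strictly positive on $M$ precisely because $\{y_n\}$ has no limit in $M$, so $1/g$ plays the role of your $f$ without ever mentioning $\hat{M}$.
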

The proof of this result makes use of the following construction, which we will also use frequently below, and hence we include a short proof. 
\begin{lemma}\label{lem:const}
	If $(M,d)$ is an uniformly continuous metric space, and $A, B\subset M$ are disjoint closed subsets, then $\inf_{x\in A, y\in B} d(x,y) > 0$. 
\end{lemma}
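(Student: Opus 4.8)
The plan is to prove the contrapositive: if $\inf_{x \in A, y \in B} d(x,y) = 0$ for disjoint closed sets $A, B$, then I can construct a continuous function on $M$ that fails to be uniformly continuous, contradicting the hypothesis that $(M,d)$ is uniformly continuous. Since $A$ and $B$ are closed and disjoint, the natural candidate is a Urysohn-type function built from the distance functions to $A$ and $B$. Specifically, I would set
\[ f(x) = \frac{d(x,A)}{d(x,A) + d(x,B)}, \]
where $d(x,A) = \inf_{a \in A} d(x,a)$. This is well-defined because disjointness and closedness guarantee the denominator never vanishes: if $d(x,A) + d(x,B) = 0$ then $x$ lies in both $\overline{A} = A$ and $\overline{B} = B$, which is impossible. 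The function $f$ is continuous since $x \mapsto d(x,A)$ and $x \mapsto d(x,B)$ are (1-Lipschitz) continuous, and it takes the value $0$ on $A$ and $1$ on $B$.

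The key step is then to exploit the assumption $\inf_{x \in A, y \in B} d(x,y) = 0$ to produce sequences witnessing the failure of uniform continuity. By definition of the infimum being zero, I can choose points $x_{(n)} \in A$ and $y_{(n)} \in B$ with $d(x_{(n)}, y_{(n)}) \to 0$. Since $f(x_{(n)}) = 0$ and $f(y_{(n)}) = 1$ for all $n$, we have $|f(x_{(n)}) - f(y_{(n)})| = 1$ for every $n$ while the distance between the argument pairs tends to zero. This directly contradicts uniform continuity with $\varepsilon_0 = 1/2$, say. Hence no continuous non-uniformly-continuous function can exist under the hypotheses, forcing $\inf_{x \in A, y \in B} d(x,y) > 0$.

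The main obstacle, such as it is, lies in verifying that $f$ is genuinely continuous (rather than merely well-defined) at every point, which reduces to confirming the denominator is bounded away from zero \emph{locally}. This is not automatic from pointwise nonvanishing alone, so I would argue that for a fixed $x_0$, the quantity $d(x,A) + d(x,B)$ is itself continuous in $x$ and strictly positive at $x_0$, hence stays positive on a neighborhood, making the quotient continuous there. Alternatively, I note that both $d(\cdot, A)$ and $d(\cdot, B)$ are $1$-Lipschitz, so the only possible trouble is division, which is controlled once positivity of the sum is established everywhere. Everything else is a direct unwinding of definitions, so I do not expect any deeper difficulty; the proof is short and essentially amounts to the observation that a Urysohn separating function cannot be uniformly continuous when the two sets are not uniformly separated.
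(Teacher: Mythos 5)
Your proposal is correct and follows essentially the same route as the paper: both construct the Urysohn quotient $f(x) = d(x,A)/[d(x,A)+d(x,B)]$, verify it is well-defined and continuous using disjointness, closedness, and the Lipschitz property of distance functions, and then observe that $f \equiv 0$ on $A$ and $f \equiv 1$ on $B$ makes $f$ continuous but not uniformly continuous whenever the sets fail to be uniformly separated.
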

\begin{proof}
	Consider the function $f: x \mapsto d(x,A) / [d(x,A) + d(x,B)]$ on $M$. Noting that $d(x,A)$ vanishes precisely on the closure of $A$, we see that $d(x,A) + d(x,B) > 0$ for every $x$, as $A$ and $B$ are assumed to be disjoint and closed. Therefore $f$ is well-defined. 
    Secondly, it is clear by triangle inequality that the function $x\mapsto d(x,A)$ is continuous, therefore $f$, as a quotient of continuous functions, is continuous. 
    However, for every $x\in A$ we have $f(x) = 0$, and for every $y\in B$ we have $f(y) = 1$. So if $\inf_{x\in A, y\in B} d(x,y) = 0$, the function $f$ would be an example of a continuous but not uniformly continuous function. Our lemma follows. 
\end{proof}

\begin{proof}[Proof of Proposition \ref{prop:LS1}]
	We prove by contradiction. Let $(x_n)_{n\geq 1}$ be a Cauchy sequence in $(M,d)$. Suppose $(x_n)$ doesn't have any convergent subsequences. Consider the subsequences given by $y_n = x_{2n}$ and $z_n = x_{2n-1}$. Then the sets $\{y_n\}$ and $\{z_n\}$ are disjoint closed sets, and $d(y_n, z_n) \to 0$. This gives a contradiction by Lemma \ref{lem:const}. 
\end{proof}

\begin{proposition}\label{prop:UCderC}
If  $(M,d)$ is uniformly continuous, then $\derS(M)$ is compact. 
\end{proposition}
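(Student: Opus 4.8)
The plan is to establish compactness of $\derS(M)$ by showing that it is both complete and totally bounded. For completeness I would first recall the standard fact that in any metric space the set of limit points is closed, so $\derS(M)$ is a closed subset of $M$; since $M$ is complete by Proposition \ref{prop:LS1}, the closed subset $\derS(M)$ is complete as well. The entire argument then reduces to proving total boundedness. (If $\derS(M) = \emptyset$ there is nothing to show, so I assume it is nonempty.)

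To prove total boundedness I would argue by contradiction, aiming to manufacture a configuration forbidden by Lemma \ref{lem:const}. If $\derS(M)$ fails to be totally bounded, then for some $\eta > 0$ no finite collection of balls of radius $\eta$ covers $\derS(M)$; choosing points greedily so that each avoids the previously chosen balls produces an infinite sequence $(p_i)$ in $\derS(M)$ with $d(p_i, p_j) \geq \eta$ for all $i \neq j$. Because each $p_i$ is a limit point of $M$, I may then select a companion point $q_i \in M$ satisfying $0 < d(p_i, q_i) < \min(1/i, \eta/3)$.

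The key step is to verify that $A := \{p_i\}$ and $B := \{q_i\}$ are disjoint closed sets whose mutual distance is zero, contradicting Lemma \ref{lem:const}. Two triangle-inequality estimates carry the whole argument: for $i \neq j$ one obtains $d(p_i, q_j) \geq \eta - \eta/3 > 0$ and $d(q_i, q_j) \geq \eta - 2\eta/3 = \eta/3 > 0$. The first estimate (together with $d(p_i,q_i) > 0$) shows $A$ and $B$ are disjoint, while the second shows both $A$ and $B$ are $\eta/3$-separated and hence uniformly isolated. A uniformly isolated set is automatically closed, since any convergent sequence drawn from it must be eventually constant by the separation bound; thus both $A$ and $B$ are closed. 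Finally $d(p_i, q_i) < 1/i \to 0$ forces $\inf_{x \in A,\, y \in B} d(x,y) = 0$, which is the desired contradiction.

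I expect the construction of the pair $(A,B)$ to be the crux of the proof: the geometric idea is that a failure of total boundedness of $\derS(M)$ yields widely separated limit points, and the limit-point property lets me perturb each one slightly to a distinct nearby companion, producing two \emph{parallel} uniformly isolated sets that nevertheless come arbitrarily close to one another. The remaining verifications — disjointness, closedness, and vanishing of the infimum — are routine triangle-inequality bookkeeping. The only point requiring care is that closedness of $A$ and $B$ genuinely relies on the \emph{uniform} separation (mere isolation of individual points would not suffice), which is precisely what the $\eta/3$ margin secures.
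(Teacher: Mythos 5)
Your proposal is correct and follows essentially the same route as the paper: reduce compactness to total boundedness using closedness of $\derS(M)$ and completeness from Proposition \ref{prop:LS1}, then derive a contradiction with Lemma \ref{lem:const} by pairing an $\eta$-separated sequence of limit points with nearby companion points. The only cosmetic difference is your perturbation bound $\min(1/i,\eta/3)$ in place of the paper's $\varepsilon/n$, which makes the disjointness and uniform-isolation estimates marginally cleaner but changes nothing essential.
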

\begin{proof}
	The derived set $\derS(M)$ is closed for any topological space. 
    By Proposition \ref{prop:LS1}, $(M,d)$ is complete. 
    It thus suffices to show that $\derS(M)$ is totally bounded. 
  
  	Suppose not for contradiction. Then there exists $\varepsilon > 0$ such that $\derS(M)$ does not admit any finite cover by balls of radius $\varepsilon$. Hence there exists a sequence $(x_n)_{n \geq 2} \subset \derS(M)$ such that for any $n\neq m$, the pairwise distance $d(x_n, x_m) \geq \varepsilon$. As each $x_n\in \derS(M)$, there exists $y_n\in M$ such that $0 < d(x_n, y_n) < \varepsilon / n$. By triangle inequality 
    \[ d(y_n, y_m) \geq d(x_n, x_m) - d(x_n, y_n) - d(x_m, y_m) > \varepsilon (1 - \frac1n - \frac1m) > \frac\varepsilon6  \]
	when $n \neq m$ and $n, m \geq 2$. 
    
    The sets $\{y_n\}$ and $\{x_n\}$ are both uniformly isolated, and hence are both closed. The two sets are disjoint. However, by construction $\inf_{n} d(x_n, y_n) = 0$. This however contradicts Lemma \ref{lem:const}. Therefore we conclude that $\derS(M)$ must be totally bounded. 
\end{proof}
\begin{proposition}\label{prop:UCUI}
If $(M,d)$ is uniformly continuous, then for every $\varepsilon > 0$, $M \setminus B(\derS(M), \varepsilon)$ is uniformly isolated. 
\end{proposition}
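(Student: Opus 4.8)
The plan is to argue by contradiction, reducing everything to Lemma \ref{lem:const}: if I can produce two disjoint closed subsets $A, B \subset M$ with $\inf_{a \in A,\, b \in B} d(a,b) = 0$, then $(M,d)$ cannot be uniformly continuous, contrary to hypothesis. So fix $\varepsilon > 0$, write $S := M \setminus B(\derS(M), \varepsilon)$, and suppose toward a contradiction that $S$ is not uniformly isolated. Negating the definition of uniform isolation, for each $n$ I obtain a pair of distinct points $p_n \neq q_n$ in $S$ with $d(p_n, q_n) < 1/n$.

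The structural fact that makes everything work is that each point of $S$ lies at distance at least $\varepsilon$ from $\derS(M)$: indeed $s \notin B(\derS(M),\varepsilon) = \bigcup_{x \in \derS(M)} B(x,\varepsilon)$ means $d(s,x) \geq \varepsilon$ for every $x \in \derS(M)$. Since $\derS(M)$ is closed and contains every limit point of $M$, any limit point of a sequence drawn from $S$ would have to lie in $\derS(M)$, which is impossible given the uniform distance bound $\varepsilon$. I would use this observation twice. First, it shows that the value sets $\{p_n\}$ and $\{q_n\}$ have no limit points in $M$, hence are closed. Second, it shows that no single point can be attained infinitely often by the sequence $(p_n)$: if $p_n = p$ for infinitely many $n$, then $d(p, q_n) = d(p_n, q_n) \to 0$ along that subsequence with every $q_n \neq p$, forcing $p$ to be a limit point of $M$ and hence an element of $\derS(M)$, contradicting $p \in S$. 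The same holds for $(q_n)$.

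With finite multiplicity in hand, I arrange disjointness by a greedy extraction. Having selected indices so that the chosen $p$'s and $q$'s are pairwise distinct and form a finite set $F$, only finitely many indices $m$ have $p_m \in F$ or $q_m \in F$ (each value occurs finitely often and $F$ is finite), so I may always pick a later index whose pair avoids $F$ and consists of two distinct points. After relabeling, $A = \{p_n\}$ and $B = \{q_n\}$ are disjoint closed sets with $d(p_n, q_n) \to 0$, so $\inf_{a \in A,\, b \in B} d(a,b) = 0$, and Lemma \ref{lem:const} delivers the contradiction. (The degenerate case $\derS(M) = \emptyset$, where $S = M$, is subsumed: then no limit points exist at all, so closedness of $A$ and $B$ is automatic.)

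I expect the main obstacle to be precisely this bookkeeping step that upgrades the raw ``not uniformly isolated'' sequences into genuinely \emph{disjoint closed} sets; the analytic content is light, but one must be careful that the points do not secretly coincide or accumulate. The single lever controlling both issues is the incompatibility of $d(p_n, q_n) \to 0$ with $S \cap \derS(M) = \emptyset$, and once it is isolated the passage to Lemma \ref{lem:const} is routine.
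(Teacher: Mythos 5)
Your proof is correct and follows essentially the same route as the paper: negate uniform isolation, produce two disjoint closed minimizing sequences inside $M \setminus B(\derS(M),\varepsilon)$, and derive a contradiction from Lemma \ref{lem:const}. The only difference is one of detail: the paper obtains closedness by noting that $M \setminus B(\derS(M),\varepsilon)$ is a closed set on which the induced topology is discrete and then simply asserts that disjoint minimizing sequences can be found, whereas you explicitly justify both the closedness (no subset of $S$ can have a limit point) and the disjointness (finite multiplicity plus a greedy extraction), filling in bookkeeping the paper leaves implicit.
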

\begin{proof}
	First recall that if $x \in M \setminus \derS(M)$, then by the definition of limit points there exists some $\delta_x$ such that $B(x,\delta_x) = \{x\}$. This means that the induced topology on $M \setminus \derS(M)$ is discrete. 
    
    We prove the proposition by contradiction. Suppose for some $\eta > 0$, the set $M \setminus B(\derS(M), \eta)$ is \emph{not} uniformly isolated. Observe that as $B(\derS(M), \eta)$ is open, $M \setminus B(\derS(M), \eta)$ is closed. And since the induced topology on it is discrete, \emph{any subset of $M \setminus B(\derS(M), \eta)$ is closed}. 
    As we assumed that $M \setminus B(\derS(M), \eta)$ is not uniformly isolated, we can find two disjoint minimizing sequences $(x_n)$ and $(y_n)$ such that $d(x_n, y_n) \to 0$. This again contradicts Lemma \ref{lem:const}. 
\end{proof}

We now have the following chain of implications
\begin{gather*}
	(M,d) \text{ is universally uniformly continuous}\\
    \Bigg\Downarrow \text{ \rlap{(by definition)}}\\
    (M,d) \text{ is uniformly continuous}\\
    \Bigg\Downarrow \text{ \rlap{(Propositions \ref{prop:UCderC} and \ref{prop:UCUI})}}\\
    \derS(M) \text{ is compact and } M \setminus B(\derS(M),\varepsilon) \text{ is uniformly isolated}\\
    \Bigg\Downarrow \text{ \rlap{(Proposition \ref{prop:derCUIUC})}}\\
    (M,d) \text{ is universally uniformly continuous}
\end{gather*}
and this proves both Theorem \ref{metricversion} and Corollary \ref{basicCor}.

\section{Proof of Theorem \ref{topologyversion}}

We start with a technical lemma. We say a triple of positive real numbers $\{a,b,c\}$ satisfies the triangle inequality if the inequalities
\[ a \leq b+ c, \quad b \leq a + c, \quad c \leq a + b \]
all hold. 
\begin{lemma}\label{lem:triangle}
If the triples $\{a,b,c\}$ and $\{x,y,z\}$ both satisfy the triangle inequality, then so does the triple $\{\max(a,x),\max(b,y),\max(c,z)\}$. 
\end{lemma}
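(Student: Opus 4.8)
The plan is to exploit the symmetry of the three target inequalities and reduce to a single case that splits cleanly according to which argument attains the maximum. Writing $A = \max(a,x)$, $B = \max(b,y)$, $C = \max(c,z)$, the three inequalities we must establish, namely $A \leq B+C$, $B \leq A+C$, and $C \leq A+B$, are obtained from one another by simultaneously permuting the roles of the three coordinates. Hence it suffices to prove just one of them, say
\[ \max(a,x) \leq \max(b,y) + \max(c,z). \]

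First I would observe the elementary monotonicity facts $b \leq \max(b,y)$ and $c \leq \max(c,z)$, and similarly $y \leq \max(b,y)$ and $z \leq \max(c,z)$. These let me bound either hypothesis from the two original triples by the right-hand side above. The key step is then a case split on the left-hand maximum. If $\max(a,x) = a$, then using the triangle inequality for the first triple I get $a \leq b + c \leq \max(b,y) + \max(c,z)$. If instead $\max(a,x) = x$, then using the triangle inequality for the second triple I get $x \leq y + z \leq \max(b,y) + \max(c,z)$. In either case the desired bound holds.

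The point worth emphasizing is that the case split on the left aligns exactly with the correct hypothesis to invoke: whichever triple supplies the maximizing first coordinate also supplies a triangle inequality whose right-hand side is dominated, term by term, by the maxima. There is no genuine obstacle here; the only thing to be careful about is bookkeeping, namely confirming that the same argument, applied after cyclically permuting $(a,b,c)$ and $(x,y,z)$, yields the remaining two inequalities $B \leq A+C$ and $C \leq A+B$ without any additional work. I would therefore state the single inequality in full and then remark that the other two follow by symmetry.
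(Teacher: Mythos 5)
Your proof is correct and is essentially the paper's own argument: the paper proves the single inequality $\max(a,x) \leq \max(b+c,\,y+z) \leq \max(b,y)+\max(c,z)$ and concludes by symmetry, and your case split on whether $\max(a,x)$ equals $a$ or $x$ is just an unpacking of that same chain of bounds. No gap; nothing further is needed.
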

\begin{proof}
If $a\leq b+c$ and $x\leq y+z$,then 
\[ \max(a,x) \leq \max(b+c, y+z) \leq \max(b,y) + \max(c,z). \]
The claim follows. 
\end{proof}

\begin{proof}[Proof of Theorem \ref{topologyversion}]
We give an explicit construction of the new metric $d$ from the old metric $\delta$. 
\begin{itemize}
	\item If $x = y$, set $d(x,y) = 0$. 
    \item If at least one of $x$ or $y$ is in $\derS(M)$, set $d(x,y) = \delta(x,y)$. 
    \item In the remaining cases, let $m$ be the unique integer such that 
    \[ 2^m < \max\Big(\delta\big(x,\derS(M)\big), \delta\big(y, \derS(M)\big)\Big) \leq 2^{m+1}. \]
    Set
    \[ d(x,y) = \max(\delta(x,y),2^m).\]
\end{itemize}

First we need to check that the function $d$ defined as above is a metric. It is obviously non-negative, non-degenerate, and symmetric. 
We only need to check that $d$ satisfies the triangle inequality.
This is where we used that $(M,\delta)$ is a metric space: by assumption $\delta$ satisfies the triangle inequality. 
We check on a case-by-case basis. 

Let $x,y,z$ be three distinct points in $M$. 
\begin{itemize}
	\item \emph{Case 1: at least two of the three points are in $\derS(M)$}\\
    	By definition the triple 
        \[ \big\{d(x,y), d(y,z), d(z,x)\big\} = \big\{\delta(x,y), \delta(y,z), \delta(z,x)\big\}\]
        since within each pair from $\{x,y,z\}$ at least one point is in $\derS(M)$. Then as $\delta$ satisfies the triangle inequality, so does $d$. 
	\item \emph{Case 2: exactly one of the three points is in $\derS(M)$}\\
    	Permuting the labels we can assume that $x\in \derS(M)$, that $\delta(y,\derS(M)) \in (2^n, 2^{n+1}]$, that $\delta(z,\derS(M)) \in (2^m, 2^{m+1}]$, and finally $n \leq m$. Then 
        \[ \Big\{d(x,y), d(x,z), d(y,z)\Big\} = \Big\{\delta(x,y), \delta(x,z), \max\big(\delta(y,z), 2^m\big)\Big\}.\]
        Observe now that the triple 
        $\{ 2^n, 2^m, 2^m \} $
        obviously satisfies the triangle inequality, and so does $\{\delta(x,y), \delta(x,z), \delta(y,z)\}$. We also have that since $x\in \derS(M)$, that $\delta(x,y) \geq \delta(y,\derS(M)) > 2^n$; and similarly for $\delta(x,z)$. And so we can write 
        \begin{multline*}
        	\Big\{d(x,y), d(x,z), d(y,z)\Big\} = \\
            \Big\{ \max\big(\delta(x,y),2^n\big), \max\big(\delta(x,z), 2^m\big), \max\big(\delta(y,z), 2^m\big)\Big\}
        \end{multline*}
        and conclude by Lemma \ref{lem:triangle} that the triangle inequality holds.
    \item \emph{Case 3: none of the three points are in $\derS(M)$}\\
		Define $m, n, p$ such that 
        \begin{gather*}
        	\delta(x,\derS(M)) \in (2^m, 2^{m+1}],\\
            \delta(y, \derS(M)) \in (2^n, 2^{n+1}],\\
            \delta(z, \derS(M)) \in (2^p, 2^{p+1}].\\
        \end{gather*}
		By permuting the labels we can assume $m \leq n \leq p$. Then the triple 
        \begin{multline*}
        \Big\{ d(x,y), d(y,z), d(x,z)\Big\} = \\
        \Big\{ \max\big(\delta(x,y), 2^n\big), \max\big(\delta(y,z), 2^m\big), \max\big(\delta(x,z), 2^m\big)\Big\}.
        \end{multline*}
        Arguing as the previous case, using that $\{2^n, 2^m, 2^m\}$ satisfies the triangle inequality, we see that again by Lemma \ref{lem:triangle} that the triangle inequality holds for $\{d(x,y), d(y,z), d(z,x)\}$. 
\end{itemize}

Thus we have shown that $d$ is a metric. 

Next we check that $(M,d)$ and $(M,\delta)$ have the same topology. 
This is true because they have essentially the same open balls:
\begin{itemize}
	\item If $x\in \derS(M)$, then $d(x,y) = \delta(x,y)$, and hence the balls $B_{\delta}(x,r) = B_d(x,r)$. 
    \item If $x\not\in \derS(M)$, then there exists some $r$ such that $B_{\delta}(x,r) = \{x\}$, since $x$ is not a limit point. We claim that $B_{d}(x,r) = \{x\}$ also, and hence $x$ is also isolated with respect to $d$. This claim holds because if $y\in \derS(M)$, then $d(x,y) = \delta(x,y) > r$; and if $y\not\in\derS(M)$ and $y \neq x$, we have $d(x,y) \geq \delta(x,y) > r$ by definition. 
\end{itemize}

Finally, we need to check that $(M,d)$ is uniformly continuous. We will use Theorem \ref{metricversion}. Since $(M,d)$ and $(M,\delta)$ have the same topology, $\derS(M)$ is still compact with respect to $d$. It suffices to check that $M \setminus B_d(\derS(M), \eta)$ is uniformly isolated. 
Let $n$ be such that $2^n < \eta \leq 2^{n+1}$.  
If $y \not\in B_d(\derS(M), \eta)$, using that $d(\derS(M), y) = \delta(\derS(M), y)$, we have that $\delta(\derS(M),y) > 2^n$. If $y_1, y_2 \in M \setminus B_d(\derS(M), \eta)$, then by the construction of $d$ we must have  
\[ d(y_1, y_2) \geq \max(\delta(y_1,y_2), 2^n) \geq 2^n. \]
And our theorem now follows.  
\end{proof}

\begin{bibdiv}
\begin{biblist}

\bib{Kundu}{article}{
  author={Kundu, Subiman},
  author={Jain, Tanvi},
  title={Atsuji spaces: equivalent conditions},
  journal={Topology Proceedings},
  volume={30},
  date={2006},
  number={1},
  pages={301--325},
}

\bib{Levine}{article}{
   author={Levine, Norman},
   title={Classroom Notes: Uniformly Continuous Linear Set},
   journal={Amer. Math. Monthly},
   volume={62},
   date={1955},
   number={8},
   pages={579--580},
   issn={0002-9890},
   review={\MR{1529116}},
}

\bib{LS}{article}{
   author={Levine, Norman},
   author={Saunders, William G.},
   title={Uniformly continuous sets in metric spaces},
   journal={Amer. Math. Monthly},
   volume={67},
   date={1960},
   pages={153--156},
   issn={0002-9890},
   review={\MR{0124881}},
}

\end{biblist}
\end{bibdiv}

\end{document}